\documentclass[a4paper,12pt]{article}
\usepackage{amssymb}
\textwidth=15.5cm 
\topmargin=0cm 
\baselineskip=16pt
\hoffset=-1cm
\usepackage{latexsym}
\usepackage{amsmath,amsthm,amssymb}
\usepackage{amsfonts}
\usepackage{eufrak}

\newtheorem{thm}{Theorem}[section]
\newtheorem{lem}[thm]{Lemma}
\newtheorem{pro}[thm]{Proposition}
\newtheorem{cor}[thm]{Corollary}

\theoremstyle{definition}
\newtheorem{defn}[thm]{Definition}
\newtheorem{exa}[thm]{Example}
\newtheorem{rem}[thm]{Remark}

\begin{document}

\begin{center}
{\Large On the Azuma inequality in spaces of subgaussian of rank $p$ random variables}
\end{center}
\begin{center}
{\sc Krzysztof Zajkowski}\\
%\footnote{The author is supported by the Polish National Science Center, Grant no. DEC-2011/01/B/ST1/03838.}\\
Institute of Mathematics, University of Bialystok \\ 
Ciolkowskiego 1M, 15-245 Bialystok, Poland \\ 
kryza@math.uwb.edu.pl 
\end{center}

\begin{abstract}
For $p>1$ let a function $\varphi_p(x)=x^2/2$ if $|x|\le 1$ and $\varphi_p(x)=1/p|x|^p-1/p+1/2$ if $|x|>1$. %($x\in\mathbb{R}$). 
For a random variable $\xi$ let $\tau_{\varphi_p}(\xi)$ denote
$\inf\{c\ge 0:\;\forall_{\lambda\in\mathbb{R}}\; \ln\mathbb{E}\exp(\lambda\xi)\le\varphi_p(c\lambda)\}$; $\tau_{\varphi_p}$ is a norm in a space $Sub_{\varphi_p}(\Omega)=\{\xi:\;\tau_{\varphi_p}(\xi)<\infty\}$ of $\varphi_p$-subgaussian random variables which we call {\it subgaussian of rank $p$ random variables}.
For $p=2$ we have the classic subgaussian random variables.

The Azuma inequality gives an estimate on the probability of the deviations of a zero-mean martingale $(\xi_n)_{n\ge0}$ with bounded increments from zero. In its classic form is assumed that $\xi_0=0$. In this paper it is shown a version of the Azuma inequality under assumption that $\xi_0$ is any  subgaussian of rank $p$ random variable. %(not necessery zero).
 
\end{abstract}

{\it 2010 Mathematics Subject Classification:} 60E15 %(Inequalities; stochastic orderings)

{\it Key words: %Cram\'er function,  infimal convolution, 
Hoeffding-Azuma's inequality, $\varphi$-subgaussian random variables}%  strong law of large numbers}
\section{Introduction}
Let $(\Omega,\mathcal{F},\mathbb{P})$ be a probability space. %fixed throughout.
Hoeffding's inequality says that for bounded independent zero-mean random variables $\xi_1,...,\xi_n$ such that $\mathbb{P}(\xi_i\in[a_i,b_i])=1$, $i=1,...n$, the following estimate on the probability of the deviation of their sum $S_n=\sum_{i=1}^n\xi_i$ from zero 
$$
\mathbb{P}(|S_n|\ge\varepsilon)\le 2\exp\Big(-\frac{2\varepsilon}{\sum_{i=1}^n(b_i-a_i)^2}\Big)
$$
holds. The proof is based on Hoeffding's lemma: If $\xi$ is a random variable with mean zero such that $\mathbb{P}(\xi\in[a,b])=1$ then
\begin{equation}
\label{est1}
\mathbb{E}\exp(\lambda\xi)\le \exp\Big(\frac{1}{8}(b-a)^2\lambda^2\Big);
\end{equation}
compare \cite{Hoeff}.

Let us observe that the above inequality means that the centered bounded random variable $\xi$ is a subgaussian random variable. Let us recall the notion of the subgaussian random variable which was introduced by Kahane in \cite{Kahane}. A random variable $\xi$ is {\it subgaussian} if there
exists a number $c\in[0,\infty)$ such that for every $\lambda\in\mathbb{R}$ the following inequality holds
$$
\mathbb{E}\exp(\lambda\xi)\le \exp\Big(\frac{c^2\lambda^2}{2}\Big),
$$
that is the moment generating function of $\xi$ is majorized by the moment generating function of some centered gaussian random variables with variance $c^2$ 
%that is  $\mathbb{E}\exp(\lambda \xi)\leq \mathbb{E}\exp(\lambda g)=\exp(\sigma^2\lambda^2/2)$, where $g\sim\mathcal{N}(0,\sigma^2)$ 
(see Buldygin and Kozachenko \cite{BK} or \cite[Ch.1]{BulKoz}). In terms of the cumulant generating functions this condition takes a form:
$\ln\mathbb{E}\exp(\lambda \xi)\leq c^2\lambda^2/2$. 

For a random variable $\xi$ a number $\tau(\xi)$ defined as follows
$$
\tau(\xi)=\inf\Big\{c\ge 0:\;\forall_{\lambda\in\mathbb{R}}\; \ln\mathbb{E}\exp(\lambda\xi)\le\frac{c^2\lambda^2}{2}\Big\}
$$
is a norm in a space $Sub(\Omega)=\{\xi\in L(\Omega):\;\tau(\xi)<\infty\}$ of subgaussian random variables, where $L(\Omega)$ denote the family of all real valued random variables defined on $\Omega$.  The space $Sub(\Omega)$ is a Banach space with respect to the norm $\tau$;
see \cite[Ch.1, Th.1.2]{BulKoz}. 

Immediately by the definition of $\tau$ we have that 
$$
\ln\mathbb{E}\exp(\lambda\xi)\le\frac{\tau(\xi)^2\lambda^2}{2}.
$$
By using this inequality one can show some estimate of tails distribution of $\xi$ in the form
$$
\mathbb{P}(|\xi|\ge\varepsilon)\le 2\exp\Big(-\frac{\varepsilon^2}{2\tau(\xi)^2}\Big);
$$ 
see \cite[Ch.1, Lem.1.3]{BulKoz}. If we do not know a value of the norm $\tau(\xi)$ but know some upper bound on it then we can formulate the above inequality substituting this bound instead of $\tau(\xi)$. And so for a centered  random variable $\xi$ essentially bounded by the interval $[a,b]$, by virtue of the inequality (\ref{est1}), we have that
$\tau(\xi)\le (b-a)/2$ and we get Hoeffding's inequality for one variable. Because for a sum of independent subgaussian random variables the following %inequality 
follows
$$
\tau\Big(\sum_{i=1}^n\xi_i\Big)^2\le  \sum_{i=1}^n\tau(\xi_i)^2,
$$
see \cite[Ch.1, Lem.1.7]{BulKoz}, then combining these facts we obtain the proof of Hoeffding's inequality; compare \cite[Ch.1, Cor.1.3]{BulKoz}.

\section{Spaces of subgaussian of rank $p$ random variables}
One can generalize the notion of subgaussian random variables to  classes of $\varphi$-subgaussian r.v.s (see \cite[Ch.2]{BulKoz}). A continuous even convex function $\varphi(x)$ $(x\in \mathbb{R}$) is called a {\em $N$-function}, if the following condition hold:\\
(a) $\varphi(0)=0$ and $\varphi(x)$ is monotone increasing for $x>0$,\\
(b) $\lim_{x\to 0}\varphi(x)/x=0$ and $\lim_{x\to \infty}\varphi(x)/x=\infty$.\\
It is called a {\it quadratic $N$-function}, if in addition $\varphi(x)=ax^2$ for all $|x|\le x_0$, with $a>0$ and $x_0>0$. The  quadratic condition is needed to ensure nontriviality for classes of $\varphi$-subgaussian random variables (see \cite[Ch.2, p.67]{BulKoz}).

Let $\varphi$ be a quadratic $N$-function. A random variable $\xi$ is said to be {\it $\varphi$-subgaussian} if there is a constant $c>0$ such that 
$\ln\mathbb{E}\exp(\lambda \xi)\leq \varphi(c\lambda)$. The {\it $\varphi$-subgaussian standard (norm) $\tau_{\varphi}(\xi)$} is defined as 
$$
\tau_{\varphi}(\xi)=\inf\{c\ge 0:\;\forall_{\lambda\in\mathbb{R}}\; \ln\mathbb{E}\exp(\lambda \xi)\le\varphi(c\lambda)\};
$$
a space $Sub_\varphi(\Omega)=\{\xi\in L(\Omega):\;\tau_{\varphi}(\xi)<\infty\}$ with the norm $\tau_{\varphi}$ is a Banach space (see \cite[Ch.2, Th.4.1]{BulKoz}). %where $L(\Omega)$ is family of all real valued random variables defined on $\Omega$. 

Now we define some class of such spaces.%quadratic $N$-functions.%spaces of subgaussian random variables of rank $p$. 
\begin{defn}
Let for $p > 1$
$$
\varphi_p(x)=\left\{
\begin{array}{ccl}
\frac{x^2}{2}, & {\rm if} & |x|\le 1,\\
\frac{1}{p}|x|^p-\frac{1}{p}+\frac{1}{2}, & {\rm if} & |x|>1.
\end{array}
\right.
$$
\end{defn} 
The functions $\varphi_p$ are examples of quadratic $N$-functions. 
Let us observe that if $1< p'< p$ then $\varphi_{p'}\le \varphi_p$ and, in consequence,  $Sub_{\varphi_{p'}}(\Omega)\subset Sub_{\varphi_p}(\Omega)$, since 
$\tau_{\varphi_{p'}}(\xi)\ge \tau_{\varphi_p}(\xi)$ for any $\xi\in Sub_{\varphi_{p'}}(\Omega)$. Let us emphasize that the spaces $\{Sub_{\varphi_p}(\Omega):\;p>1\}$ form
increasing family with respect to $p$. % and their $\varphi_p$-norms are nonincraesing. 

For the  sake of completeness our presentation we show that any centered bounded random variable is subgaussian of any rank $p$. In general it is $\varphi$-subgaussian random variable for any quadratic $N$-function $\varphi$ (see \cite[Ex.3.1]{Rita}).   
\begin{pro}
Let $\varphi$ be an $N$-function such that $\varphi(x)=x^2/2$ for $|x|\le 1$. If $\xi$ is a bounded random variables with $\mathbb{E}\xi=0$ then $\xi\in Sub_\varphi(\Omega)$.
\end{pro}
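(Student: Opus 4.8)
The plan is to split the real line into two regimes for the parameter $\lambda$, to bound the cumulant generating function $\ln\mathbb{E}\exp(\lambda\xi)$ differently on each, and then to stitch the two bounds together by choosing the scaling constant $c$ large enough. Since $\xi$ is bounded, fix $M>0$ with $\mathbb{P}(\xi\in[-M,M])=1$. Two elementary estimates are then available. First, because $\xi$ is centered and essentially bounded by $[-M,M]$, Hoeffding's lemma (\ref{est1}) with $a=-M$, $b=M$ gives the quadratic bound $\ln\mathbb{E}\exp(\lambda\xi)\le M^2\lambda^2/2$ valid for every $\lambda$; this is where the mean-zero hypothesis enters, and it will control small $\lambda$. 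Second, the pointwise inequality $\exp(\lambda\xi)\le\exp(|\lambda|M)$ yields the linear estimate $\ln\mathbb{E}\exp(\lambda\xi)\le M|\lambda|$, which will control large $\lambda$.

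The reason the quadratic bound alone cannot close the argument is that a general $N$-function $\varphi$ with $\varphi(x)=x^2/2$ near the origin may grow subquadratically at infinity, so $\varphi(c\lambda)$ need not dominate $M^2\lambda^2/2$ for large $\lambda$, no matter how large $c$ is chosen. This is exactly why the superlinear growth of $\varphi$ must be exploited through the linear estimate. The key auxiliary fact I would record is that, for a convex function with $\varphi(0)=0$, the ratio $\varphi(x)/x$ is nondecreasing on $(0,\infty)$; together with $\varphi(1)=1/2$ and the evenness of $\varphi$, this yields $\varphi(y)\ge\tfrac12|y|$ whenever $|y|\ge 1$.

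With these ingredients I would take $c=2M$ and verify $\ln\mathbb{E}\exp(\lambda\xi)\le\varphi(c\lambda)$ on the two regions cut out by the condition $|c\lambda|\le 1$. On the inner region $|\lambda|\le 1/(2M)$ one has $\varphi(c\lambda)=(2M\lambda)^2/2=2M^2\lambda^2$, which dominates the Hoeffding bound $M^2\lambda^2/2$ with room to spare. On the outer region $|\lambda|>1/(2M)$ one has $|c\lambda|>1$, so the auxiliary fact gives $\varphi(2M\lambda)\ge\tfrac12|2M\lambda|=M|\lambda|$, which dominates the linear estimate. Hence the defining inequality for $\tau_\varphi$ holds with $c=2M$, so that $\tau_\varphi(\xi)\le 2M<\infty$ and $\xi\in Sub_\varphi(\Omega)$.

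I expect the only delicate point to be conceptual rather than computational: recognizing that the subgaussian (quadratic) bound cannot be used globally, and that the superlinearity of $\varphi$, quantified through the normalization $\varphi(1)=1/2$, is precisely what forces the choice $c=2M$ and closes the outer region. Once this is seen, the region split and the monotonicity of $\varphi(x)/x$ make the verification routine.
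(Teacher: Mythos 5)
Your proof is correct. The high-level strategy coincides with the paper's: split $\mathbb{R}$ into a small-$\lambda$ regime controlled by a quadratic bound and a large-$\lambda$ regime controlled by the linear bound $\psi_\xi(\lambda)\le M|\lambda|$, and use the quadratic behaviour of $\varphi$ near $0$ together with its superlinear growth at infinity. But the ingredients you use in each regime are genuinely different and more elementary. For small $\lambda$ the paper does not invoke Hoeffding's lemma; it Taylor-expands the cumulant generating function, $\psi_\xi(\lambda)=\tfrac12\psi_\xi''(\theta)\lambda^2$, and bounds $\psi_\xi''$ on $[-1,1]$, which requires a digression on the smoothness and strict convexity of $\psi_\xi$ and produces a non-explicit constant $\sqrt{c}=\big(\max_{[-1,1]}\psi_\xi''\big)^{1/2}$. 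For large $\lambda$ the paper uses the inverse-function inequality $\varphi(\varphi^{(-1)}(d|\lambda|))\le\varphi(d\lambda\varphi^{(-1)}(1))$ from Buldygin--Kozachenko, whereas you use only the monotonicity of $\varphi(x)/x$ (immediate from convexity and $\varphi(0)=0$) together with the normalization $\varphi(1)=1/2$. Your version buys three things: an explicit bound $\tau_\varphi(\xi)\le 2M$; complementary regions $|\lambda|\le 1/(2M)$ and $|\lambda|>1/(2M)$ that cover $\mathbb{R}$ with no gap (the paper's regions $|\lambda|\le1$ and $|\lambda|>1/d$ only cover $\mathbb{R}$ when $d\ge1$, a point it glosses over with a ``without loss of generality'' adjustment); and independence from the external lemma on $\varphi^{(-1)}$. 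The paper's route, in exchange, illustrates the general mechanism ($\psi_\xi''$ bounded near $0$ plus linear growth) that works for quadratic $N$-functions normalized as $ax^2$ rather than specifically $x^2/2$, though your argument adapts to that case with trivial changes. Your observation that the quadratic bound alone cannot close the argument, because $\varphi$ may grow subquadratically at infinity, correctly identifies why the linear estimate is indispensable.
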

\begin{proof}
If $\xi = 0$ with probability one then
$$
0=\psi_\xi(\lambda)\le \varphi(c\lambda)
$$
for $\lambda\in\mathbb{R}$, $c\ge 0$ and any $N$-function $\varphi$. Let us recall that if $\xi$ is bounded but nonconstant then $\psi_\xi$ is strictly convex on $\mathbb{R}$ and it follows positivity $\psi_\xi^{\prime\prime}$ on whole $\mathbb{R}$.

Let $|\xi|\le d$ almost surely then 
$$
\psi_\xi(\lambda)\le d|\lambda|=\varphi(\varphi^{(-1)}(d|\lambda|))\le\varphi(d\lambda\varphi^{(-1)}(1))
$$
for $|\lambda|> 1/d$ (see \cite[Ch.2, Lem. 2.3]{BulKoz}), where $\varphi^{(-1)}(x)$, $x\ge 0$, is the inverse function of $\varphi(x)$, $x\ge 0$. For $|\lambda|\le 1$,
by the Taylor theorem, we get
$$
\psi_\xi(\lambda)=\psi_\xi(0)+\psi_\xi'(0)\lambda+\frac{1}{2}\psi_\xi^{\prime\prime}(\theta)\lambda^2=\frac{1}{2}\psi_\xi^{\prime\prime}(\theta)\lambda^2
$$
for some $\theta$ between $0$ and $\lambda$. Let $c=\max_{\lambda\in[-1,1]}\psi_\xi^{\prime\prime}(\lambda)$ then
$
\psi_\xi(\lambda)\le 1/2(\sqrt{c}\lambda)^2
$.
Let us observe that for $|\lambda|\le 1/\sqrt{c}$ the function $1/2(\sqrt{c}\lambda)^2=\varphi(\sqrt{c}\lambda)$. Without loss of generality we may assume that $d>\sqrt{c}$ that is $1/d<1/\sqrt{c}$.   Taking $b=\max\{d\varphi^{(-1)}(1),\;\sqrt{c}\}$ we get
$$
\psi_\xi(\lambda)\le \varphi(b\lambda)
$$
for every $\lambda\in\mathbb{R}$ which follows $\xi\in Sub_\varphi(\Omega)$.
\end{proof}

Let $\varphi(x)$ ($x\in\mathbb{R}$) be a real-valued function. The function $\varphi^\ast(y)$ ($y\in\mathbb{R}$) defined by $\varphi^\ast(y)=\sup_{x\in\mathbb{R}}\{xy-\varphi(x)\}$ is called the {\it Young-Fenchel transform}
or the {\it convex conjugate} of $\varphi$ (in general, $\varphi^\ast$ may take value $\infty$). It is known that if $\varphi$ is a quadratic $N$-function then $\varphi^\ast$ is quadratic $N$-function too. For instance, since our $\varphi_p$ is a differentiable (even at $\pm 1$) function one can easy check that $\varphi_p^\ast=\varphi_q$ for $p,q>1$, if $1/p+1/q=1$.

%Let us recall that the convex conjugate is order-reversing and possesses some scaling property. If $\varphi_1\ge\varphi_2$ then $\varphi_1^\ast\le\varphi_2^\ast$.
%Let for $a>0$ and $b\neq 0$ $\psi(x)=a\varphi(bx)$ then $\psi^\ast(y)=a\varphi^\ast(y/(ab))$ (see e.g.\cite[Ch.X, Prop.1.3.1]{HUiL}).

An exponential estimate for tails distribution of a random variable $\xi$ belonging to the space $Sub_{\varphi}(\Omega)$ is as follows:
%Now we can obtain some weaker form of the above estimation but with using the general function $\varphi$:
\begin{equation}
\label{estm}
\mathbb{P}(|\xi|\ge\varepsilon)\le 2\exp\Big(-\varphi^\ast\Big(\frac{\varepsilon}{\tau_{\varphi}(\xi)}\Big)\Big);
\end{equation}
see \cite[Ch.2, Lem.4.3]{BulKoz}. Moreover $\xi\in Sub_{\varphi}(\Omega)$ if and only if $\mathbb{E}\xi=0$ and there exist constant $C>0$ and $D>0$ such that
$$
\mathbb{P}(|\xi|\ge\varepsilon)\le C\exp\Big(-\varphi^\ast\Big(\frac{\varepsilon}{D}\Big)\Big)
$$
for every $\varepsilon>0$; compare \cite[Ch.2, Cor.4.1]{BulKoz}.

By virtue of the above facts one can show examples of subgaussian of rank $p$ random variables.
\begin{exa}
Let $q>1$ and a random variable $\xi$ has the double Weibull distribution with a density function
$$
g_\xi(x)=\frac{1}{2}|x|^{q-1}\exp\Big\{-\frac{1}{q}|x|^q\Big\}.
$$
%Then 
%$$
%\mathbb{P}(|\xi|\ge\varepsilon)\le \exp\Big(-\frac{1}{q}|\varepsilon|^q\Big)
%$$
Then $\xi\in Sub_{\varphi_p}(\Omega)$, where $1/p+1/q=1$.
\end{exa}
\begin{exa}
Let $\xi\sim\mathcal{N}(0,1)$, then $\eta=|\xi|^{2/q}-\mathbb{E}|\xi|^{2/q}\in Sub_{\varphi_p}(\Omega)$.
\end{exa}

\section{On the Azuma inequality}
%The Hoeffding Lemma asserts that for centered bounded random variables such that $\mathbb{P}(\xi_i\in[a_i,b_i])=1$, $i=1,...n$,
Before  we prove some general form of Azuma's inequality, first we show some upper bound on the norm of centered bounded random variable in $Sub_{\varphi_p}(\Omega)$ for any $p>1$.
\begin{lem}
\label{lem1}
Let $\xi$ be a bounded random variable such that $\mathbb{P}(\xi\in [a,b])=1$ and $\mathbb{E}\xi=0$. Let $c$ denote the number $(b-a)/2$  and  $d$ the number $\max\{-a,b\}$. Then for every $p>1$ 
the norm 
$\tau_{\varphi_p}(\xi)\le \gamma_r=c^2/(2d)\{r[2(d/c)^2+1/r-1/2)]\}^{1/r}$, where $r=\min\{p,2\}$.
\end{lem}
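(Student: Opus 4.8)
The plan is to bound the cumulant generating function $\psi_\xi(\lambda)=\ln\mathbb{E}\exp(\lambda\xi)$ by two elementary estimates and then check that their lower envelope is dominated by $\varphi_p(\gamma_r\lambda)$. First, Hoeffding's lemma (\ref{est1}) with $b-a=2c$ gives the quadratic bound $\psi_\xi(\lambda)\le c^2\lambda^2/2$, while $|\xi|\le d$ almost surely (because $d=\max\{-a,b\}$) gives the linear bound $\psi_\xi(\lambda)\le d|\lambda|$. Hence $\psi_\xi(\lambda)\le h(|\lambda|)$, where $h(\lambda)=\min\{c^2\lambda^2/2,\,d\lambda\}$ for $\lambda\ge0$. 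Since $\varphi_p$ is even and $h$ depends only on $|\lambda|$, it suffices to prove $h(\lambda)\le\varphi_p(\gamma_r\lambda)$ for $\lambda\ge0$, which yields $\tau_{\varphi_p}(\xi)\le\gamma_r$ straight from the definition of the norm. The case $p\ge2$ (so $r=2$ and $\gamma_2=c$) is immediate: one checks $\varphi_p\ge\varphi_2$ pointwise for $p\ge2$, so $\psi_\xi(\lambda)\le c^2\lambda^2/2=\varphi_2(c\lambda)\le\varphi_p(c\lambda)$. The substance lies in the range $1<p<2$, where $r=p$.

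The two bounds defining $h$ cross at $\lambda_0=2d/c^2$, where both equal $2d^2/c^2$; for $\lambda\le\lambda_0$ the quadratic term is the smaller and for $\lambda\ge\lambda_0$ the linear term is. Using $d\ge c$ (which follows from $2d\ge b-a=2c$) one verifies $\gamma_p\lambda_0>1$, so $\varphi_p(\gamma_p\lambda_0)$ lies in the power branch of $\varphi_p$; solving $\varphi_p(\gamma_p\lambda_0)=h(\lambda_0)=2d^2/c^2$ for $\gamma_p$ reproduces exactly the stated formula. Thus $\gamma_p$ is precisely the value making the target inequality an equality at the crossover point, and the proof reduces to showing that $\lambda_0$ is the worst case, i.e. that $h(\lambda)\le\varphi_p(\gamma_p\lambda)$ holds on $[0,\lambda_0]$ and on $[\lambda_0,\infty)$ separately.

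On $[\lambda_0,\infty)$ I would set $F(\lambda)=\varphi_p(\gamma_p\lambda)-d\lambda$, so that $F(\lambda_0)=0$ and $F'(\lambda)=\gamma_p^p\lambda^{p-1}-d$; since $p>1$ the derivative $F'$ is increasing, and a short computation shows $F'(\lambda_0)\ge0$ is equivalent to $2d^2(p-1)+c^2(1-p/2)\ge0$, which is manifest for $1<p<2$. Hence $F'\ge0$ throughout and $F\ge0$. On $[0,\lambda_0]$ I would compare with the quadratic branch: for $\gamma_p\lambda\le1$ the required inequality $c^2\lambda^2/2\le\gamma_p^2\lambda^2/2$ is just $\gamma_p\ge c$, while for $\gamma_p\lambda>1$ the function $G(\lambda)=\varphi_p(\gamma_p\lambda)-c^2\lambda^2/2$ has $G'(\lambda)=\lambda(\gamma_p^p\lambda^{p-2}-c^2)$ with $\gamma_p^p\lambda^{p-2}$ strictly decreasing (as $p<2$); thus $G$ is unimodal, and being $\ge0$ at the left endpoint $\lambda=1/\gamma_p$ (again by $\gamma_p\ge c$) and $=0$ at $\lambda_0$, it stays nonnegative on the whole interval.

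The crux, on which both parts of the $[0,\lambda_0]$ estimate rest, is the inequality $\gamma_p\ge c$, and this is the step I expect to require the most care. Writing $t=(d/c)^2\ge1$, it is equivalent to $2pt+1-p/2\ge 2^p t^{p/2}$ for $t\ge1$ and $1<p<2$. I would prove it by fixing $p$ and differentiating in $t$: the difference is increasing in $t$ on $[1,\infty)$ (its $t$-derivative is increasing and already nonnegative at $t=1$ since $2^{p-1}<2$), so it suffices to treat $t=1$, where the claim becomes $3p/2+1\ge 2^p$. The latter holds on $[1,2]$ because convexity of $2^p$ gives $2^p\le 2p$ (the value stays below the chord through $(1,2)$ and $(2,4)$), and $2p\le 3p/2+1$ whenever $p\le2$. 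Combining the two regions gives $h(\lambda)\le\varphi_p(\gamma_p\lambda)$ for every $\lambda\ge0$, and therefore $\tau_{\varphi_p}(\xi)\le\gamma_p=\gamma_r$.
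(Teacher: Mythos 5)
Your proof is correct and follows the same route as the paper: the same two elementary bounds $\psi_\xi(\lambda)\le c^2\lambda^2/2$ (Hoeffding) and $\psi_\xi(\lambda)\le d|\lambda|$, the same majorant $\min\{c^2\lambda^2/2,d|\lambda|\}$, the same crossover point $2d/c^2$ at which $\gamma_p$ is calibrated, and the same reduction of the case $p\ge 2$ to $\gamma_2=c$ via $\varphi_p\ge\varphi_2$. The one substantive difference is that the paper simply \emph{asserts} the central inequality $\min\{c^2\lambda^2/2,d|\lambda|\}\le\varphi_p(\gamma_p\lambda)$ for $1<p<2$ without any verification, whereas you actually prove it: the monotonicity argument for $F$ on $[\lambda_0,\infty)$, the unimodality argument for $G$ on $[1/\gamma_p,\lambda_0]$, and above all the inequality $\gamma_p\ge c$, reduced to $3p/2+1\ge 2^p$ on $[1,2]$ via convexity of $2^p$, are all correct and constitute exactly the missing content. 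So your write-up is not merely equivalent to the paper's proof but completes it at its only nontrivial step.
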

\begin{proof}
If $p\ge2$ then $r=2$ and $\gamma_r=\gamma_2=c$. By Hoeffding's lemma we have that $\tau_{\varphi_2}(\xi)\le c$, and because for every $p\ge 2$ the norm $\tau_{\varphi_p}(\xi)\le \tau_{\varphi_2}(\xi)$ then the Lemma follows.

Assume now that $1<p<2$, then $r=p$.
Let us note that there exist a very simple estimate of the cumulant generating function of $\xi$:
$$
\psi_\xi(\lambda)=\ln\mathbb{E}\exp(\lambda\xi)\le \ln\mathbb{E}\exp(d|\lambda|)=d|\lambda|.
$$
%Notice that the function $d|\lambda|$ do not belong to  $Sub_{\varphi_p}(\Omega)$ for any $p>1$. But 
We can form some majorant of $\psi_\xi$ with this function and Hoeffding's bound.

Solving the equation  $d\lambda=c^2\lambda^2/2$ we obtain $\lambda=2d/c^2$. Let us emphasize that for $1<p<2$ a function
$$
f(\lambda):=\min\Big\{\frac{c^2\lambda^2}{2},d|\lambda|\Big\}=\left\{
\begin{array}{ccl}
\frac{c^2\lambda^2}{2}, & {\rm if} & |\lambda|\le \frac{2d}{c^2} \\
d|\lambda|, & {\rm if} & |\lambda|>\frac{2d}{c^2}
\end{array}
\right.
$$ 
is a majorant of $\psi_\xi$. Let us observe now that $f(2d/c^2)=2(d/c)^2\ge 2$, since $d\ge c$. We find now $\gamma_p$ such that $\varphi_p(\gamma_p2d/c^2)=2(d/c)^2$. Notice that $\varphi_p([-1,1])=[0,1/2]$. It follows that solving the equation $\varphi_p(\gamma_p2d/c^2)=2(d/c)^2$ we should use the form of $\varphi_p(x)$ for $|x|>1$, i.e. $\varphi_p(x)=1/p|x|^p-1/p+1/2$. A solution of the equation
$$
\frac{1}{p}\Big|\gamma_p\frac{2}{d}\Big|^\frac{1}{p}-\frac{1}{p}+\frac{1}{2}=2\Big(\frac{d}{c}\Big)^2
$$
has the form 
$$
\gamma_p=\frac{c^2}{2d}\Big\{p\Big[2\Big(\frac{d}{c}\Big)^2+\frac{1}{p}-\frac{1}{2}\Big]\Big\}^\frac{1}{p}.
$$
Let us emphasize that for $p\in(1,2]$ we have the following inequality
$$
f(\lambda)\le \varphi_p(\gamma_p\lambda).
$$
Since $f$ is the majorant of $\psi_\xi$, we get that
$$
\psi_\xi(\lambda)\le \varphi_p(\gamma_p\lambda)
$$
for every $\lambda\in\mathbb{R}$. Thus, by definition of the norm $\tau_{\varphi_p}$ at $\xi$, %the norm 
$$
\tau_{\varphi_p}(\xi)\le \gamma_r=\frac{c^2}{2d}\Big\{r\Big[2\Big(\frac{d}{c}\Big)^2+\frac{1}{r}-\frac{1}{2}\Big]\Big\}^\frac{1}{r},%\quad (r=p),
$$
in the case $r=p$, which completes the proof.
\end{proof}
\begin{rem}
\label{uwg}
Let us note once again that $\gamma_2=c$. Because $\gamma_p$ is the solution of the equation  $\varphi_p(\gamma_p2d/c^2)=2(d/c)^2$ and $\varphi_p\ge\varphi_{p'}$ for $p\ge p'$
then $\gamma_{p}\le \gamma_{p'}$. More precisely $\gamma_p$ is strictly increasing as $p$ is decreasing to $1$. Thus there exists  $\lim_{p\searrow 1}\gamma_p$. Denote it by $\gamma_1$. One can check that $\gamma_1=d+c^2/4d$.  
\end{rem}
Now we can formulate our main result.
\begin{thm}
\label{tw1}
Let $\xi_0$ be a subgaussian of a rank $p$ random variable  such that  $\tau_{\varphi_p}(\xi_0)\le d_0$ and $(\xi_n)_{n\ge0}$ be a martingale with bounded increments, i.e. $|\xi_n-\xi_{n-1}|\le d_n$ almost surely for $n=1,2,...$. Let  $c$ denote $\sqrt{\sum_{i=1}^nd_i^2}$ and $d$ the number $\sum_{i=1}^nd_i$. Then
%$$
%\mathbb{P}(|\xi_n|\ge\varepsilon)\le 2\exp\Big(-\varphi_q\Big(\frac{\varepsilon}{\sqrt{\sum_{i=0}^nd_i^2}}\Big)\Big).
%$$
$$
\mathbb{P}(|\xi_n|\ge\varepsilon)\le 2\exp\Big(-\varphi_q\Big(\frac{\varepsilon}{\sqrt[r]{\gamma_r^r+d_0^r}}\Big)\Big),
$$
where $1/p+1/q=1$, $r:=\min\{p,2\}$ and $\gamma_r=c^2/(2d)\{r[2(d/c)^2+1/r-1/2)]\}^{1/r}$.
\end{thm}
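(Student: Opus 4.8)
The plan is to control the cumulant generating function $\psi_{\xi_n}(\lambda)=\ln\mathbb{E}\exp(\lambda\xi_n)$ by peeling off the martingale increments one at a time, reduce the increment part to the majorant $f$ already analysed in Lemma \ref{lem1}, and then read off the tail bound from the general estimate (\ref{estm}) together with the identity $\varphi_p^\ast=\varphi_q$.

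First I would set $\eta_i=\xi_i-\xi_{i-1}$, so that $\xi_n=\xi_0+\sum_{i=1}^n\eta_i$ with $\mathbb{E}[\eta_i\mid\mathcal{F}_{i-1}]=0$ and $|\eta_i|\le d_i$ almost surely. Conditioning on $\mathcal{F}_{n-1}$ and applying the conditional version of Hoeffding's lemma (\ref{est1}) to $\eta_n$, which is conditionally centred and supported in $[-d_n,d_n]$, gives $\mathbb{E}[\exp(\lambda\eta_n)\mid\mathcal{F}_{n-1}]\le\exp(d_n^2\lambda^2/2)$, while the cruder pointwise bound $|\eta_n|\le d_n$ gives $\mathbb{E}[\exp(\lambda\eta_n)\mid\mathcal{F}_{n-1}]\le\exp(d_n|\lambda|)$. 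Iterating either estimate down to $\xi_0$ through the tower property yields
$$\psi_{\xi_n}(\lambda)\le\psi_{\xi_0}(\lambda)+\tfrac{1}{2}c^2\lambda^2\quad\text{and}\quad\psi_{\xi_n}(\lambda)\le\psi_{\xi_0}(\lambda)+d|\lambda|,$$
with $c^2=\sum_{i=1}^nd_i^2$ and $d=\sum_{i=1}^nd_i$, and hence $\psi_{\xi_n}(\lambda)\le\psi_{\xi_0}(\lambda)+f(\lambda)$, where $f(\lambda)=\min\{c^2\lambda^2/2,\,d|\lambda|\}$ is exactly the function majorised in Lemma \ref{lem1}. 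Since $d=\sum d_i\ge\sqrt{\sum d_i^2}=c$ by the $\ell^1$-$\ell^2$ norm inequality, the computation in the proof of Lemma \ref{lem1} applies verbatim with the present values of $c$ and $d$ and gives $f(\lambda)\le\varphi_p(\gamma_r\lambda)$ with $r=\min\{p,2\}$. Together with the hypothesis $\psi_{\xi_0}(\lambda)\le\varphi_p(d_0\lambda)$ this produces
$$\psi_{\xi_n}(\lambda)\le\varphi_p(d_0\lambda)+\varphi_p(\gamma_r\lambda),\qquad\lambda\in\mathbb{R}.$$

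The decisive step, which I expect to be the main obstacle, is the elementary superposition inequality
$$\varphi_p(a\lambda)+\varphi_p(b\lambda)\le\varphi_p\big((a^r+b^r)^{1/r}\lambda\big),\qquad a,b\ge0,$$
to be applied with $a=d_0$ and $b=\gamma_r$. The exponent $r=\min\{p,2\}$ is forced by the two extreme regimes: for $|\lambda|$ small both sides are quadratic and one needs $(a^r+b^r)^{1/r}\ge\sqrt{a^2+b^2}$, whereas for $|\lambda|$ large the leading power forces $(a^r+b^r)^{1/r}\ge(a^p+b^p)^{1/p}$, and $r=\min\{p,2\}$ is precisely the value meeting both demands. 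To prove the inequality on the whole line I would fix $N=(a^r+b^r)^{1/r}$, set $G(x)=\varphi_p(Nx)-\varphi_p(ax)-\varphi_p(bx)$ for $x\ge0$, observe that $G\equiv0$ on $[0,1/N]$, and then verify $G'\ge0$ for $x>1/N$ using $\varphi_p'(t)=t$ for $0\le t\le1$ and $\varphi_p'(t)=t^{p-1}$ for $t>1$; the verification splits into the few cases according to which of $ax,bx,Nx$ exceed $1$, each reducing at the relevant kink to $N^2\ge a^2+b^2$ or $N^p\ge a^p+b^p$. Granting this, $\tau_{\varphi_p}(\xi_n)\le(\gamma_r^r+d_0^r)^{1/r}$, and since $\varphi_q=\varphi_p^\ast$ is increasing the tail estimate (\ref{estm}) applied with this bound on the norm gives
$$\mathbb{P}(|\xi_n|\ge\varepsilon)\le2\exp\Big(-\varphi_q\Big(\frac{\varepsilon}{\sqrt[r]{\gamma_r^r+d_0^r}}\Big)\Big),$$
as claimed.
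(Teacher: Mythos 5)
Your proposal is correct and follows essentially the same route as the paper's proof: the same martingale peeling yielding $\psi_{\xi_n}(\lambda)\le\psi_{\xi_0}(\lambda)+\min\{c^2\lambda^2/2,\,d|\lambda|\}$, the same reuse of the majorant computation from Lemma \ref{lem1} (justified, as you note, by $d\ge c$), the same superposition inequality giving $\tau_{\varphi_p}(\xi_n)\le\sqrt[r]{\gamma_r^r+d_0^r}$, and the same appeal to (\ref{estm}) together with $\varphi_p^\ast=\varphi_q$. The only variation is that you verify the superposition inequality by a direct derivative computation --- where, incidentally, your claim that $G\equiv 0$ on $[0,1/N]$ holds only for $r=2$; for $r=p<2$ one has $N^2\ge a^2+b^2$ so $G\ge 0$ there, which still suffices --- whereas the paper deduces it from the convexity (hence superadditivity, since it vanishes at $0$) of $\varphi_p\circ\sqrt[r]{\cdot}$.
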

\begin{proof}
%The first part of the proof is a reapiting 
First we recall an argument which gives the similar estimate on the moment generating function of martingales with bounded increments as in the  case of sums of independent random variables. %(Hoeffding's Lemma). 
For the martingale $(\xi_n)_{n\ge 0}$ we have  
$$
\mathbb{E}\exp(\lambda\xi_n)=\mathbb{E}\Big(\exp(\lambda\xi_{n-1})\mathbb{E}\big(\exp(\lambda(\xi_n-\xi_{n-1}))\big|\mathcal{F}_{n-1} \big)\Big),
$$
where $\mathcal{F}_{n-1}$ denotes $\sigma$-field generated by random variables $\xi_0,\xi_1,...,\xi_{n-1}$.

Now we find a bound for $\mathbb{E}(\exp(\lambda(\xi_n-\xi_{n-1}))|\mathcal{F}_{n-1})$. Let $\eta_n:=(\xi_n-\xi_{n-1})/d_n$. Observe that 
$-1\le\eta_n\le 1$ a.s.. By convexity of the natural exponential function we get
$$
\exp(\lambda(\xi_n-\xi_{n-1}))=\exp(d_n\lambda\eta_n)\le \frac{1+\eta_n}{2}\exp(d_n\lambda)+\frac{1-\eta_n}{2}\exp(-d_n\lambda),
$$ 
and, in consequence,
$$
\mathbb{E}\big(\exp(d_n\lambda\eta_n)\big|\mathcal{F}_{n-1}\big)\le \frac{1}{2}\exp(d_n\lambda)+\frac{1}{2}\exp(-d_n\lambda),%\le\exp(\frac{\lambda^2d_n^2}{2}
$$
since $\mathbb{E}(\eta_n|\mathcal{F}_{n-1})=0$. By virtue of the inequality $1/2\exp(d_n\lambda)+1/2\exp(-d_n\lambda)\le\exp(\lambda^2d_n^2/2)$ one gets
$$
\mathbb{E}\exp(\lambda\xi_n)\le\exp\Big(\frac{\lambda^2d_n^2}{2}\Big)\mathbb{E}\exp(\lambda\xi_{n-1})
$$
and, inductively, 
$$
\mathbb{E}\exp(\lambda\xi_n)\le\exp\Big(\frac{\lambda^2\sum_{i=1}^nd_i^2}{2}\Big)\mathbb{E}\exp(\lambda\xi_0).
$$
Taking the logarithm of both sides we obtain
$$
\ln\mathbb{E}(\exp(\lambda\xi_n)\le\frac{\lambda^2\sum_{i=1}^nd_i^2}{2}+\ln\mathbb{E}\exp(\lambda\xi_0),
$$
that is
\begin{equation}
\label{row1}
\psi_{\xi_n}(\lambda)\le \varphi_2\Big(\lambda\Big(\sum_{i=1}^nd_i^2\Big)^{1/2}\Big)+\psi_{\xi_0}(\lambda).
\end{equation}
Because  $\psi_{\xi_0}(\lambda)\le \varphi_p(d_0\lambda)$ and a random variable $\xi_n-\xi_0$ is the bounded random variable ($|\xi_n-\xi_0|\le\sum_{i=1}^nd_i$ a.s.), by Lemma \ref{lem1}, 
%$\varphi_2\le \varphi_p$ for $p\ge 2$ 
we can rewrite the above estimate on  $\psi_{\xi_n}$ as follows
\begin{equation}
\label{psin}
\psi_{\xi_n}(\lambda)\le \varphi_p(\gamma_r\lambda)+\varphi_p(d_0\lambda), 
\end{equation}
where $\gamma_r$ is as in Lemma \ref{lem1} ($c=\sqrt{\sum_{i=1}^nd_i^2}$ and $d=\sum_{i=1}^nd_i$).

The composition $\varphi_p$ with the function $\sqrt[r]{\cdot}$ is still convex. By properties of $N$-functions %the positive superadditivity of a strictly convex function $\varphi$ such that $\varphi(0)=0$ (see 
(see \cite[Ch.2, Lem.2.2]{BulKoz}) for $\lambda>0$ we get 
\begin{eqnarray*}
\varphi_p(\gamma_r\lambda)+\varphi_p(d_0\lambda)
&=&\varphi_p\big(\sqrt[r]{\gamma_r^r\lambda^r}\big)+\varphi_p\big(\sqrt[r]{d_0^r\lambda^r}\big) \\
\; &\le& \varphi_p\Big(\lambda\sqrt[r]{\gamma_r^r+d_0^r}\Big),
\end{eqnarray*}
which combining with (\ref{psin}) gives
$$
\psi_{\xi_n}(\lambda)\le \varphi_p\Big(\lambda\sqrt[r]{\gamma_r^r+d_0^r}\Big).
$$
Because $\varphi_p$ is the even function then the above inequality is valid for any $\lambda$. It means that the random variable $\xi_n\in Sub_{\varphi_p}(\Omega)$ and its norm $\tau_{\varphi_p}(\xi_n)\le \sqrt[r]{\gamma_r^r+d_0^r}$. 

Recall that the convex conjugate $\varphi_p^\ast=\varphi_q$, where $1/p+1/q=1$. By (\ref{estm}) and the above estimate of $\tau_{\varphi_p}(\xi_n)$  we obtain our %demended 
inequality.
\end{proof}

\begin{rem}
 For $\xi_0=0$ a.s. $d_0=0$ and we can assume that $\xi_0$ is subgaussian of rank $2$ (classic subgaussian). Recall that if $p=2$ then $q=2$ and $\varphi_q(x)=x^2/2$. In this case $\gamma_r=\gamma_2=c$ and  we get the classic form of Hoeffding-Azuma's inequality.
\end{rem} 
Let us observe that $\xi_0=0$ a.s. is subgaussian of any rank $p$. Consider more precisely case $1<p<2$. If  $1<p<2$ then the H\"older conjugate $q>2$. Let us recall that for $\varepsilon\in\mathbb{R}$ $\varphi_q(\varepsilon)\ge \varphi_2(\varepsilon)$ and moreover $\varphi_q(\varepsilon)> \varphi_2(\varepsilon)$ if
$|\varepsilon|>1$. Because $c=\gamma_2<\gamma_p$, where $c=\sqrt{\sum_{i=1}^n d_i^2}$, there exists exactly one $\varepsilon_p>0$ such that
$$
\varphi_q\Big(\frac{\varepsilon_p}{\gamma_p}\Big)=\varphi_2\Big(\frac{\varepsilon_p}{c}\Big),
$$
i.e. $\varepsilon_p$ is the unique solution of the equation
$$
\frac{1}{q}\Big(\frac{\varepsilon_p}{\gamma_p}\Big)^q-\frac{1}{q}+\frac{1}{2}=\frac{\varepsilon_p^2}{2c}.
$$
If $|\varepsilon|<\varepsilon_p$ then $\varphi_q(\varepsilon/\gamma_p)<\varphi_c(\varepsilon/c)$ and $\varphi_q(\varepsilon/\gamma_p)>\varphi_c(\varepsilon/c)$ if
$|\varepsilon|>\varepsilon_p$. It follows that for $|\varepsilon|>\varepsilon_p$ the estimate
$$
\mathbb{P}(|\xi_n|\ge\varepsilon)\le 2\exp\Big(-\varphi_q\Big(\frac{\varepsilon}{\gamma_p}\Big)\Big)
$$
is sharper than the Hoeffding-Azuma's inequality.
It means that by $\xi_0=0$ Theorem \ref{tw1} is some supplement of the Hoeffding-Azuma inequality. Moreover in this Theorem we considered  the case when $\xi_0$ is any subgaussian of rank $p$ random variable.

Many another examples of concentration inequalities one can  find for instance in \cite{McDi}. Let us emphasize that most of them  concern  the case of independent summands. The Azuma inequality is dealt with dependent ones. Let us note that in the original Azuma's paper \cite{Azuma} are considered bounded increments satisfying some general conditions that hold for martingales increments. It is  most important to us that we can find  some bound of the norm of their sums in the spaces of subgaussian of rank $p$ random variables which allow us to get an estimate for the probabilities of tail distributions. Applications of such estimates may by multiple. I would like to drew attention on some application  to prove of the strong laws of large numbers for dependent random variables in these spaces
(see \cite{Zaj}).
%Let us note that exponents in many concentration inequalities as Hoeffding-Azuma's  or Mc Diarmid's inequalities or  Chernoff bounds  are some functions of order $2$ (compare for instance ?). Let us emphasize that in the case of the Hoeffding-Azuma inequality for same $\varepsilon$ we can improve the order of the exponents and obtain which gives 

\end{document}